\def\multiset#1#2{\ensuremath{\left(\kern-.3em\left(\genfrac{}{}{0pt}{}{#1}{#2}\right)\kern-.3em\right)}}
\numberwithin{equation}{section}
\theoremstyle{plain}
\newtheorem{theorem}{Theorem}[section]
\newtheorem{lemma}[theorem]{Lemma}
\newtheorem{corollary}[theorem]{Corollary}
\title{\bf Counting generalized Schr\"{o}der paths}
\author{Xiaomei Chen\thanks{Corresponding author: xmchen@hnust.edu.cn}, Yuan Xiang}
\affil{\small School of Mathematics and Computational Science\authorcr \small Hunan University of Science and Technology\authorcr \small Xiangtan 411201, China}
\date{\small Mathematics Subject Classifications: 05A15}
\begin{document}

\maketitle

\begin{abstract}
A Schr\"{o}der path is a lattice path from $(0,0)$ to $(2n,0)$ with steps $(1,1)$, $(1,-1)$ and $(2,0)$ that never goes below the $x-$axis. A small Schr\"{o}der path is a Schr\"{o}der path with no $(2,0)$ steps on the $x-$axis. In this paper, a 3-variable generating function $R_L(x,y,z)$ is given for  Schr\"{o}der paths and small Schr\"{o}der paths respectively. As corollaries, we obtain the generating functions for several kinds of generalized Schr\"{o}der paths counted according to the order in a unified way.

\bigskip\noindent \textbf{Keywords:} Schr\"{o}der path, Narayana number, generating function
\end{abstract}

\section{Introduction}
In this paper, we will consider the following sets of steps for lattice paths:
\begin{align*}
	S_1&=\{(1,1),(1,-1)\},\\
	S_2&=\{(r,r),(r,-r)|r\in \mathbb{N}^{+}\},\\
    S_3&=\{(1,1),(1,-1),(2,0)\},\\
    S_4&=\{(r,r),(r,-r),(2r,0)|r\in \mathbb{N}^{+}\},\\
    S_5&=\{(1,1),(1,-1),(2r,0)|r\in \mathbb{N}^{+}\},\\
    S_6&=\{(r,r),(r,-r),(2,0)|r\in \mathbb{N}^{+}\},
\end{align*}
where $(r,r)$, $(r,-r)$, $(2r,0)$ are called \emph{up steps}, \emph{down steps} and \emph{horizontal steps} respectively.

For a given set $S$ of steps, let $L_S(n)$ denote the set of lattice paths from $(0,0)$ to $(2n,0)$ with steps in $S$, and never go below the $x-$axis. Let $A_S(n)$ denote the subset of $L_S(n)$ whose member paths have no horizontal steps on the $x-$axis. We denote by $L_S=\bigcup_{n\geq 1}L_S(n)$ and $A_S=\bigcup_{n\geq 1}A_S(n)$. Then $L_{S_1}(n)$, $L_{S_3}(n)$ and $A_{S_3}(n)$ are the sets of \emph{Dyck paths}, \emph{Schr\"{o}der paths} and \emph{small Schr\"{o}der paths} of \emph{order} $n$ respectively.

It is well known that $|L_{S_1}(n)|$ is the $n$th \emph{Catalan number} (A000108 in \cite{OEIS}), $|L_{S_3}(n)|$ is the $n$th \emph{large Schr\"{o}der number} (A006318), and $|A_{S_3}(n)|$ is the $n$th \emph{small Schr\"{o}der number} (A001003). Define a \emph{peak} in a Dyck path to be a vertex between an up step and a down step. Then the number of Dyck paths of order $n$ with $k$ peaks is the well known \emph{Narayana number} (A001263)
$$N(n,k)=\frac{1}{n}\binom{n}{k}\binom{n}{k-1}.$$
The $n$th \emph{Narayana polynomial} is defined as $N_n(y)=\sum_{1\leq k\leq n}N(n,k)y^k$ for $n\geq 1$ with $N_0(y)=1$. In \cite{Sulanke1}, Sulanke gave the generating function for the Narayana polynomial as
\begin{equation}\label{equ:Sulanke1}
  \sum_{n\geq 0}N_n(y)x^n=(1+(1-y)x-\sqrt{1-2(1+y)x+(1-y)^2x^2})/(2x).
\end{equation}

Let
$$P_{S_i}(x)=1+\sum_{n\geq 1}|L_{S_i}(n)|x^n$$
and
$$Q_{S_i}(x)=1+\sum_{n\geq 1}|A_{S_i}(n)|x^n$$
denote the generating functions for $|L_{S_i}(n)|$ and $|A_{S_i}(n)|$ respectively. As one type of generalization of Dyck paths, $L_{S_2}(n)$ has been studied by several authors. The generating function $P_{S_2}(x)$ is given in \cite{Kung} and \cite{Coker} with different methods as
\begin{equation}\label{equ:Kung1}
  P_{S_2}(x)=(1+3x-\sqrt{1-10x+9x^2})/(8x).
\end{equation}
Moreover, Coker \cite{Coker} and Sulanke \cite{Sulanke1} expressed $|L_{S_2}(n)|$ as a combination of Narayana numbers, and Woan \cite{Woan} gave a three-term recurrence for $|L_{S_2}(n)|$. For other types of generalization of Dyck paths, readers can refer to \cite{Huq} and \cite{Rukavicka}.

Comparing to the above results about generalization of Dyck paths, generalization of Schr\"{o}der paths has been rarely studied until Kung and Miler \cite{Kung} gave the generating functions $P_{S_i}(x)$ ($4\leq i\leq 6$). Later, Huh and Park \cite{Huh} expressed $|A_{S_4}(n)|$ as a combination of Narayana numbers.

Note that we can also obtain Equation \eqref{equ:Kung1} by considering the number of runs of Dyck paths. Here a \emph{run} in a lattice path is defined to be a vertex between two consecutive steps of the same kind. Let $R(n,k,S_1)$ denote the number of lattice paths in $L_{S_1}(n)$ with $k$ runs. Since a Dyck path of order $n$ with $k$ peaks has $2n-2k$ runs, we obtain from Equation \eqref{equ:Sulanke1} that
\begin{equation}\label{equ:Sulanke2}
\begin{aligned}
  &1+\sum_{n,k\geq 1}R(n,k,S_1)x^ny^k=1+\sum_{n,k\geq 1}N(n,k)x^ny^{2n-2k}\\
  =&(1+(y^2-1)x-\sqrt{1-2(1+y^2)x+(1-y^2)^2x^2})/(2xy^2).
  \end{aligned}
\end{equation}
Then Equation \eqref{equ:Kung1} is derived from Equation \eqref{equ:Sulanke2} by setting $y=2$.

Motivated by the above observation, we study the number of runs for Schr\"{o}der paths according to the following two types: a run is \emph{diagonal} if it is the joint of two up steps or two down steps, and a run is \emph{horizontal} if it is the joint of two horizontal steps.

For a Schr\"{o}der path $P$, let $\mathrm{dr}(P)$, $\mathrm{hr}(P)$ and $\mathrm{order}(P)$ denote the number of diagonal runs, the number of horizontal runs and the order of $P$ respectively. Then the generating function $R_L(x,y,z)$ is defined for $L\subseteq L_{S_3}$ as
\begin{equation*}
R_L(x,y,z)=1+\sum_{P\in L}x^{\mathrm{order}(P)}y^{\mathrm{dr}(P)}z^{\mathrm{hr}(P)}.
\end{equation*}
In this paper, we give $R_L(x,y,z)$ for $L=L_{S_3}$ and $L=A_{S_3}$. As corollaries, we obtain the generating functions $P_{S_i}(x)$ and $Q_{S_i}(x)$ for $4\leq i\leq 6$ in a unified way.

\section{The case for Schr\"{o}der paths}
In the following, we use $U$, $D$ and $H$ to denote the steps $(1,1)$, $(1,-1)$ and $(2,0)$ respectively. For a lattice $P$ and a step $s$, the \emph{insertion} of $s$ at a vertex $v$ of $P$ is defined as following: decompose $P$ into two parts at $v$ as $P=P_1P_2$, where $P_i$ maybe empty. Then we connect the initial vertex of $s$ to the end vertex of $P_1$, and connect the end vertex of $s$ to the initial vertex of $P_2$. See Figure \ref{fig:Insertion} for an example.
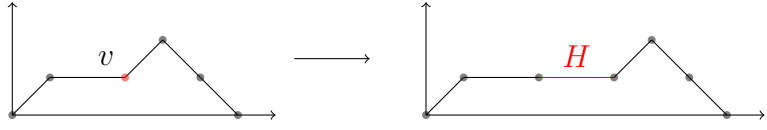
\begin{figure}[t]
\centering
\begin{tikzpicture}[scale=0.5]
\draw[black] (0,0) -- (1,1);\draw[black] (1,1) -- (3,1);\draw[black] (3,1) -- (4,2);
\draw[black] (4,2) -- (5,1);\draw[black] (5,1) -- (6,0);
\draw[->] (0,0)-- (7,0); \draw[->] (0,0) -- (0,3);
\fill [black,opacity=.5](0,0)circle (3pt);
\fill [black,opacity=.5](1,1)circle (3pt);
\fill [red,opacity=.5](3,1)circle (3pt);
\fill [black,opacity=.5](4,2)circle (3pt);
\fill [black,opacity=.5](5,1)circle (3pt);
\fill [black,opacity=.5](6,0)circle (3pt);
\node [black,left] at (3,1.5) {$v$};

\path[->] (7.5,1.5) edge (9.5,1.5);

\draw[black] (11,0) -- (12,1);\draw[black] (12,1) -- (14,1);\draw[red] (14,1) --node [above] {$H$} (16,1);\draw[black] (16,1) -- (17,2);
\draw[black] (17,2) -- (18,1);\draw[black] (18,1) -- (19,0);
\draw[->] (11,0)-- (20,0); \draw[->] (11,0) -- (11,3);
\fill [black,opacity=.5](11,0)circle (3pt);
\fill [black,opacity=.5](12,1)circle (3pt);
\fill [black,opacity=.5](14,1)circle (3pt);
\fill [black,opacity=.5](16,1)circle (3pt);
\fill [black,opacity=.5](17,2)circle (3pt);
\fill [black,opacity=.5](18,1)circle (3pt);
\fill [black,opacity=.5](19,0)circle (3pt);
\end{tikzpicture}
\caption{An example of the insertion of an $H$ step.}
\label{fig:Insertion} 
\end{figure}

Given $P\in L_{S_1}(n)$ with $k$ peaks, let $V$ denote the set of vertices of $P$ other than runs. We then insert $m$ $H$ steps to $P$ as following:
\begin{enumerate}[(1)]
  \item We firstly choose $i$ vertices from $V$, and insert an $H$ step at each chosen vertex. In this step, we have $\binom{2k+1}{i}$ choices, and each insertion has no effect to the number of runs.
  \item For the lattice path obtained after step (1), we choose $j$ vertices from its runs, and insert an $H$ step at each chosen vertex. In this step, we have $\binom{2n-2k}{j}$ choices, and the number of diagonal runs will decrease by $j$ after insertion.
  \item For the lattice path obtained after step (2), we insert the remaining $m-i-j$ $H$ steps immediately after the $i+j$ $H$ steps that have been inserted. In this step, we have $\multiset{m-i-j}{i+j}$ choices, and the number of horizontal runs will increase by $m-i-j$ after insertion.
\end{enumerate}

Let $\mathrm{Ins}_m(P)$ denote the set of all Schr\"{o}der paths obtained from $P$ by the above insertion. Then we have
\begin{equation}\label{equ:insert}
  |\mathrm{Ins}_m(P)|=\binom{2k+1}{i}\binom{2n-2k}{j}\multiset{m-i-j}{i+j}.
\end{equation}
Moreover, we have $\mathrm{order}(P')=m+n$, $\mathrm{dr}(P')=2n-2k-j$, $\mathrm{hr}(P')=m-i-j$ for each $P'\in \mathrm{Ins}_m(P)$.

On the other hand, let $HL_{S_3}$ denote the subset of $L_{S_3}$ whose member paths consisting of $H$ steps only. Let $UL_{S_3}$ denote the subset of $L_{S_3}$ whose member paths have at least one $U$ step. It is obvious that each path of $UL_{S_3}$ can be obtained uniquely from a Dyck path by inserting some $H$ steps as above. Thus we have
$$L_{S_3}=HL_{S_3}\cup UL_{S_3}=HL_{S_3}\cup \bigcup_{P\in L_{S_1}, m\geq 0}\mathrm{Ins}_m(P).$$
Summarizing the above discussion, we then obtain the following result.
\begin{theorem}\label{thm:schroder}
\begin{equation*}
\begin{aligned}
  &R_{L_{S_3}}(x,y,z)\\
  =&\frac{1-xz+x}{1-xz}(1+(1-w)u-\sqrt{1-2(1+w)u+(1-w)^2u^2})/(2u),
\end{aligned}
\end{equation*}
where $u=x\left(\frac{x+y(1-xz)}{1-xz}\right)^2$ and $w=\left(\frac{1-xz+x}{x+y(1-xz)}\right)^2$.
\end{theorem}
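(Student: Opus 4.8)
The plan is to turn the insertion construction set up just above into a single generating-function identity and then to recognise the outcome as a substitution instance of Sulanke's formula \eqref{equ:Sulanke2}. First I would use the decomposition
\begin{equation*}
L_{S_3}=HL_{S_3}\cup\bigcup_{P\in L_{S_1},\,m\ge0}\mathrm{Ins}_m(P)
\end{equation*}
recorded above to write $R_{L_{S_3}}(x,y,z)$ as $1$ plus the contribution of the horizontal-only paths plus the contribution of $\bigcup_{P,m}\mathrm{Ins}_m(P)$. The horizontal part is elementary: a member of $HL_{S_3}$ of order $n$ is $H^n$, with no diagonal runs and $n-1$ horizontal runs, so it contributes $\sum_{n\ge1}x^nz^{n-1}=x/(1-xz)$.

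For the remaining contribution I would group the paths $P'\in\mathrm{Ins}_m(P)$ by the parameters $i,j$ of the three-step insertion and put $l=m-i-j$. Using \eqref{equ:insert} together with $\mathrm{order}(P')=n+m$, $\mathrm{dr}(P')=2n-2k-j$, $\mathrm{hr}(P')=m-i-j$ and the fact that there are $N(n,k)$ Dyck paths in $L_{S_1}(n)$ with $k$ peaks, this contribution becomes
\begin{equation*}
\sum_{n\ge1}\sum_{k=1}^{n}N(n,k)\sum_{i,j,l\ge0}\binom{2k+1}{i}\binom{2n-2k}{j}\multiset{l}{i+j}\,x^{n+i+j+l}\,y^{2n-2k-j}\,z^{l}.
\end{equation*}
I would then carry out the inner summations in the order $l$, $j$, $i$. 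Summing over $l$ gives $\sum_{l\ge0}\multiset{l}{i+j}(xz)^l=(1-xz)^{-(i+j)}$, valid for every $i+j\ge0$; the binomial theorem then yields $\sum_i\binom{2k+1}{i}\bigl(\tfrac{x}{1-xz}\bigr)^i=a^{2k+1}$ and $\sum_j\binom{2n-2k}{j}\bigl(\tfrac{x}{y(1-xz)}\bigr)^j=b^{2n-2k}$, where $a=\tfrac{1-xz+x}{1-xz}$ and $b=\tfrac{x+y(1-xz)}{y(1-xz)}$. Hence the contribution equals $\sum_{n\ge1}\sum_{k\ge1}N(n,k)\,x^n y^{2n-2k} a^{2k+1} b^{2n-2k}$.

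The decisive step is to rewrite this as $a\sum_{n,k\ge1}N(n,k)X^nY^{2n-2k}$ with $X=xa^2$ and $Y=yb/a$, which by \eqref{equ:Sulanke2} equals $a\bigl(\tfrac{1+(Y^2-1)X-\sqrt{1-2(1+Y^2)X+(1-Y^2)^2X^2}}{2XY^2}-1\bigr)$. Setting $u=XY^2$ and $w=Y^{-2}$, I would check that $u=x\bigl(\tfrac{x+y(1-xz)}{1-xz}\bigr)^2$ and $w=\bigl(\tfrac{1-xz+x}{x+y(1-xz)}\bigr)^2$, matching the statement, and simplify $1+(Y^2-1)X=1+(1-w)u$, $-2(1+Y^2)X=-2(1+w)u$, $(1-Y^2)^2X^2=(1-w)^2u^2$, $2XY^2=2u$. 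Adding the three pieces and using $1+x/(1-xz)=a$, the terms free of the square root cancel and the whole expression collapses to $\tfrac{1-xz+x}{1-xz}\cdot\tfrac{1+(1-w)u-\sqrt{1-2(1+w)u+(1-w)^2u^2}}{2u}$, which is the claimed formula. I expect the only genuine obstacle to be bookkeeping: confirming that the $(i,j)$-decomposition of $\mathrm{Ins}_m(P)$ is an honest partition so that no Schr\"{o}der path is counted twice, handling the boundary term $i+j=0$ in the $l$-sum, and pushing the final algebraic reduction through so that it lands on exactly the $u$ and $w$ displayed in the theorem rather than a differently packaged equivalent.
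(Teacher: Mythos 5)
Your proposal is correct and follows essentially the same route as the paper: the same decomposition $L_{S_3}=HL_{S_3}\cup\bigcup_{P,m}\mathrm{Ins}_m(P)$, the same triple summation over the insertion parameters (your $l$-sum is the paper's $m$-sum after the shift $l=m-i-j$), and the same reduction to Sulanke's Narayana generating function with the identifications $u=XY^2$, $w=Y^{-2}$. The remaining bookkeeping you flag (the $i+j=0$ boundary term and the final algebraic packaging into $u$ and $w$) goes through exactly as you predict.
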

\begin{proof}
By Equation \eqref{equ:insert}, we have
\begin{small}
\begin{align*}
  &R_{L_{S_3}}(x,y,z)\\
  =&1+\sum_{P\in HL_{S_3}}x^{\mathrm{order}(P)}y^{\mathrm{dr}(P)}z^{\mathrm{hr}(P)}+\sum_{P\in UL_{S_3}}x^{\mathrm{order}(P)}y^{\mathrm{dr}(P)}z^{\mathrm{hr}(P)}\\
  =&1+\frac{x}{1-xz}\\
  &+\sum_{\substack{n,k\geq 1\\m,i,j\geq 0}}N(n,k)\binom{2k+1}{i}\binom{2n-2k}{j}\multiset{m-i-j}{i+j}x^{m+n}y^{2n-2k-j}z^{m-i-j}\\
  =&\frac{1-xz+x}{1-xz}+\sum_{n,k\geq 1}N(n,k)x^ny^{2n-2k}\\
  &\cdot\sum_{\substack{0\leq i\leq 2k+1\\0\leq j\leq 2n-2k}}\binom{2k+1}{i}\binom{2n-2k}{j}x^{i+j}y^{-j}\sum_{m\geq i+j}\binom{-i-j}{m-i-j}(-xz)^{m-i-j}\\
  =&\frac{1-xz+x}{1-xz}\\
  &+\sum_{n,k\geq 1}N(n,k)x^ny^{2n-2k}\sum_{\substack{0\leq i\leq 2k+1\\0\leq j\leq 2n-2k}}\binom{2k+1}{i}\binom{2n-2k}{j}x^{i+j}y^{-j}(1-xz)^{-i-j}\\
  =&\frac{1-xz+x}{1-xz}+\sum_{n,k\geq 1}N(n,k)x^ny^{2n-2k}\left(1+\frac{x}{y(1-xz)}\right)^{2n-2k}\left(1+\frac{x}{1-xz}\right)^{2k+1}\\
  =&\frac{1-xz+x}{1-xz}\sum_{n\geq 0}N_{n}(w)u^n,
\end{align*}
\end{small}
then Theorem \ref{thm:schroder} is derived from Equation \eqref{equ:Sulanke1}.
\end{proof}

The generating functions $P_{S_i}(x)$ for $4\leq i\leq 6$ were derived by Kung and Mier \cite{Kung}. Here we can obtain them as a direct corollary of the above result.
\begin{corollary}\cite{Kung}\label{coro:Kung}
\begin{align*}
    &P_{S_4}(x)=\frac{(1-x)(1+x-4x^2)-(1-x)^2\sqrt{1-12x+16x^2}}{2x(2-3x)^2},\\
    &P_{S_5}(x)=\frac{2x-1+\sqrt{1-8x+12x^2-4x^3}}{2x(x-1)},\\
    &P_{S_6}(x)=\frac{1+2x-x^2-\sqrt{(1-x)(1-11x+7x^2-x^3)}}{2x(x-2)^2}.
\end{align*}
\end{corollary}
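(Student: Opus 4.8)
The key observation is that the six step-sets $S_4,S_5,S_6$ are precisely the step-sets one obtains by ``coarsening'' the $U/D/H$ steps of $S_3$ in three different ways: $S_4$ replaces each of $U,D,H$ by a whole family indexed by a scaling parameter $r$, $S_5$ scales only the horizontal steps, and $S_6$ scales only the diagonal (up/down) steps. In each case a path of order $n$ over $S_i$ corresponds to a Schröder path together with, at each run of the appropriate type, a choice of how to ``merge'' consecutive steps — that is, a composition. Concretely, I would first check the following dictionary: a maximal block of $\ell+1$ consecutive equal diagonal steps in a Schröder path (contributing $\ell$ diagonal runs) can be reinterpreted as a single ``long'' diagonal step of the enlarged step-set in $2^{\ell}$ ways (any composition of $\ell+1$), and similarly a block of $\ell+1$ consecutive $H$ steps (contributing $\ell$ horizontal runs) reinterprets in $2^{\ell}$ ways when horizontal steps are scaled. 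Thus passing from $L_{S_3}$ to $L_{S_4}$ amounts to the substitution $y\mapsto 2$, $z\mapsto 2$; passing to $L_{S_5}$ amounts to $y\mapsto 1$, $z\mapsto 2$; and passing to $L_{S_6}$ amounts to $y\mapsto 2$, $z\mapsto 1$. Hence
$$P_{S_4}(x)=R_{L_{S_3}}(x,2,2),\qquad P_{S_5}(x)=R_{L_{S_3}}(x,1,2),\qquad P_{S_6}(x)=R_{L_{S_3}}(x,2,1),$$
and it remains only to substitute and simplify.

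The second half of the proof is pure algebra: plug each pair $(y,z)$ into the formulas $u=x\bigl(\tfrac{x+y(1-xz)}{1-xz}\bigr)^2$ and $w=\bigl(\tfrac{1-xz+x}{x+y(1-xz)}\bigr)^2$, then substitute into
$$R_{L_{S_3}}(x,y,z)=\frac{1-xz+x}{1-xz}\cdot\frac{1+(1-w)u-\sqrt{1-2(1+w)u+(1-w)^2u^2}}{2u}.$$
For instance, at $(y,z)=(2,2)$ one gets $1-xz=1-2x$, so $u=x\bigl(\tfrac{x+2(1-2x)}{1-2x}\bigr)^2=x\bigl(\tfrac{2-3x}{1-2x}\bigr)^2$ and $w=\bigl(\tfrac{1-2x+x}{x+2(1-2x)}\bigr)^2=\bigl(\tfrac{1-x}{2-3x}\bigr)^2$; the quantity under the radical, $1-2(1+w)u+(1-w)^2u^2=(1-(1-w)u)^2-4wu$, then collapses after clearing denominators to a constant multiple of $1-12x+16x^2$, which matches the stated discriminant for $P_{S_4}$. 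Analogous but shorter computations at $(1,2)$ and $(2,1)$ produce the discriminants $1-8x+12x^2-4x^3$ and $(1-x)(1-11x+7x^2-x^3)$ respectively, and tracking the rational prefactor $\tfrac{1-xz+x}{2u(1-xz)}$ together with the $1+(1-w)u$ term yields the claimed closed forms.

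**The main obstacle is justifying the combinatorial dictionary cleanly**, i.e.\ that the generating-function substitution $y\mapsto 2$ (resp.\ $z\mapsto 2$) really does convert $R_{L_{S_3}}$ into $P_{S_i}$. The subtlety is boundary effects: in $L_{S_4}$ and $L_{S_6}$ the enlarged diagonal steps must still keep the path weakly above the $x$-axis, and one must check that merging consecutive $U$'s (or consecutive $D$'s) across a run never creates a path that dips below the axis that wasn't already below — this is immediate because merging $U^{a}$ into a single step $(a,a)$ only replaces a staircase by its chord, which lies \emph{above} the staircase, and similarly merging $D^{a}$ replaces a descending staircase by a chord lying above it. One also has to confirm the bijection is exactly $2^{(\#\text{runs of the relevant type})}$-to-one and that no path is produced twice; this follows from the standard bijection between compositions of $\ell+1$ and subsets of an $\ell$-element set of ``cut points,'' which are precisely the runs. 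Once this correspondence is nailed down, Corollary~\ref{coro:Kung} is a three-line specialization followed by routine simplification, which I would relegate to a remark that the radicands factor as stated.
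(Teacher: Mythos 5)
Your proposal is correct and takes essentially the same route as the paper: the paper invokes the Huh--Park black/white coloring of runs (white $=$ merged, black $=$ cut), which is exactly your composition/cut-point dictionary, and then specializes $R_{L_{S_3}}(x,y,z)$ at $(2,2)$, $(1,2)$ and $(2,1)$. The only remark worth making is that your worry about merged diagonal steps dipping below the axis is vacuous, since $\ell+1$ consecutive copies of $(1,1)$ (or of $(1,-1)$, or of $(2,0)$) are collinear, so the merged path coincides pointwise with the original.
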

\begin{proof}
We use a bijection given by Huh and Park \cite{Huh}. Let $\bar{L}_{S_3}(n)$ denote the set of Schr\"{o}der paths of order $n$ whose runs are colored in either black or white, and other vertices are colored in black only. For $P\in \bar{L}_{S_3}(n)$. Let $\phi(P)$ denote the lattice path obtained from $P$ as following: delete all white vertices of $P$, and then connect adjacent black vertices with line segments. See \cite[Figure 8]{Huh} for an example. It is obvious that $\phi$ is a bijection from $\bar{L}_{S_3}(n)$ to $L_{S_4}(n)$, which implies that
$$P_{S_4}(x)=R_{L_{S_3}}(x,2,2)=\frac{(1-x)(1+x-4x^2)-(1-x)^2\sqrt{1-12x+16x^2}}{2x(2-3x)^2}.$$
Similarly, we can obtain $P_{S_5}(x)$ and $P_{S_6}(x)$ by setting the pair $(y,z)$ to be $(1,2)$ and $(2,1)$ in $R_{L_{S_3}}(x,y,z)$ respectively.
\end{proof}

Using the techniques in \cite{Flajolet} ([Chapter VI]), Kung and Miler \cite{Kung} gave the asymptotic formula for $|L_{S_4}(n)|$. The asymptotic formulas for $|L_{S_5}(n)|$ and $|L_{S_6}(n)|$ can be obtained from  Corollary \ref{coro:Kung} in a similar way:
\begin{align*}
  |L_{S_4}(n)|\sim &\frac{\beta_1}{\alpha_1^n\sqrt{\pi n^3}},\\
  |L_{S_5}(n)|\sim &\frac{\beta_2}{\alpha_2^n\sqrt{\pi n^3}},\\
  |L_{S_6}(n)|\sim &\frac{\beta_3}{\alpha_3^n\sqrt{\pi n^3}},
\end{align*}
where $\alpha_i$ and $\beta_i$ are defined as following:
\begin{enumerate}[(1)]
  \item $\alpha_1=\frac{3-\sqrt{5}}{8}$ is the root of equation $f_1(x)=1-12x+16x^2=0$, and $\beta_1=\frac{(1-\alpha_1)^2\sqrt{-\alpha_1f_1'(\alpha_1)}}{4\alpha_1(2-3\alpha_1)^2}=\frac{(35-15\sqrt{5})(\sqrt{6\sqrt{5}-10})}{4}$;
  \item $\alpha_2=0.16243\cdots$ is the root of equation $f_2(x)=1-8x+12x^2-4x^3=0$, and $\beta_2=\frac{\sqrt{-\alpha_2f_2'(\alpha_2)}}{4\alpha_2(1-\alpha_2)}=1.55669\cdots$;
  \item $\alpha_3=0.09678\cdots$ is the root of equation $f_3(x)=(1-x)(1-11x+7x^2-x^3)$, and $\beta_3=\frac{\sqrt{-\alpha_3f_3'(\alpha_3)}}{4\alpha_3(2-\alpha_3)^2}=0.68998\cdots$.
\end{enumerate}

Theorem \ref{thm:schroder} can also be used to study colored Schr\"{o}der paths. For instance, let $a(n)$ denote the number of Schr\"{o}der paths of order $n$ with their horizontal runs colored in one of three given colors. Then we obtain from Theorem \ref{thm:schroder} that
\begin{equation}\label{equ:pyramid}
1+\sum_{n\geq 1}a(n)x^n=R_{L_{S_3}}(x,1,3)=\frac{3x-1+\sqrt{1-10x+25x^2-16x^3}}{2x(2x-1)}.
\end{equation}
The coefficients of the above function appear as sequence A186338 in OEIS, and is related to sequence A091866.

For the definition of pyramid and pyramid weight, see \cite[Definition 2.1]{Denise}. Let $T(n,k)$ denote the number of Dyck paths of order $n$ that have pyramid weight $k$. Combing Equation \ref{equ:pyramid} with a result of Denise and Simion (\cite[Theorem 2.3]{Denise}), we then obtain the following result.
\begin{corollary}
$$a(n)=\sum_{k=0}^n T(n,k)2^k.$$
\end{corollary}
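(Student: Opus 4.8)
The plan is to reduce the corollary to a generating-function identity and then read off coefficients. Put $\Phi(x,y):=\sum_{n\ge 0}\sum_{k\ge 0}T(n,k)\,x^{n}y^{k}$. A Dyck path of order $n$ has pyramid weight at most $n$, so $T(n,k)=0$ for $k>n$ and hence $[x^{n}]\Phi(x,2)=\sum_{k=0}^{n}T(n,k)2^{k}$; also $[x^{n}]R_{L_{S_3}}(x,1,3)=a(n)$ for $n\ge 1$ by Equation \eqref{equ:pyramid}. So it suffices to prove
\[
\Phi(x,2)=R_{L_{S_3}}(x,1,3).
\]

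First I would record the right-hand side in a convenient form. Substituting $(y,z)=(1,3)$ into Theorem \ref{thm:schroder} (this is exactly the computation behind Equation \eqref{equ:pyramid}), and using $N_{n}(1)=C_{n}$ so that $\sum_{n\ge 0}N_{n}(1)u^{n}$ is the Catalan series $C(u)=(1-\sqrt{1-4u})/(2u)$, which satisfies $C(t)=1+tC(t)^{2}$, gives
\[
F:=R_{L_{S_3}}(x,1,3)=\frac{1-2x}{1-3x}\,C\!\left(x\left(\tfrac{1-2x}{1-3x}\right)^{2}\right).
\]
Feeding $C=1+tC^{2}$ with $t=x\bigl(\tfrac{1-2x}{1-3x}\bigr)^{2}$ back in yields $\tfrac{1-3x}{1-2x}F=1+xF^{2}$, i.e.
\[
x(1-2x)F^{2}-(1-3x)F+(1-2x)=0.
\]

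Next I would invoke \cite[Theorem 2.3]{Denise}, which puts $\Phi(x,y)$ in closed form as a quadratic irrationality; equivalently, $\Phi$ is the unique formal power series satisfying $x\,\Phi^{2}-\beta(x,y)\,\Phi+1=0$ for an explicit rational function $\beta$ with $\beta(0,y)=1$. The crux is to check that the substitution $y=2$ sends this quadratic to the one displayed above, i.e.\ that $\beta(x,2)=\tfrac{1-3x}{1-2x}$, whence after clearing the denominator $x(1-2x)\Phi(x,2)^{2}-(1-3x)\Phi(x,2)+(1-2x)=0$. Granting this, $\Phi(x,2)$ and $F$ are both formal power series solving the same quadratic over $\mathbb{Q}(x)$; since the linear coefficient is a unit in $\mathbb{Q}[[x]]$ that quadratic has a unique power-series solution, so $\Phi(x,2)=F$, and extracting $[x^{n}]$ proves the corollary. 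A more pedestrian alternative keeps the radicals and simplifies $\Phi(x,2)$ directly, using the identity $(1-3x)^{2}-4x(1-2x)^{2}=1-10x+25x^{2}-16x^{3}$ to match it against the right-hand side of Equation \eqref{equ:pyramid}.

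The main obstacle I foresee is bookkeeping rather than algebra, which is light here: one must quote \cite[Theorem 2.3]{Denise} with the conventions used in this paper—order equal to semilength, the empty path carrying pyramid weight $0$ and contributing the constant term $1$—and with the correct branch of the square root, so that the specialization $y=2$ lines up term by term with $1+\sum_{n\ge 1}a(n)x^{n}$. If a self-contained derivation of $\Phi$ is preferred, one can argue that a maximal $U^{a}D^{a}$ factor never straddles a return of the path to the $x$-axis, so $\Phi=1/(1-\Psi)$ with $\Psi$ the generating function of prime Dyck paths $URD$, and that forming $URD$ from a Dyck path $R$ raises the pyramid weight by $1$ exactly when $R=U^{c}D^{c}$ (including the empty path) and leaves it unchanged otherwise; this gives $\Psi=x\Phi+\frac{x(y-1)}{1-xy}$, hence $x\Phi^{2}-\frac{1+x-2xy}{1-xy}\Phi+1=0$, so $\beta(x,y)=\frac{1+x-2xy}{1-xy}$ and indeed $\beta(x,2)=\frac{1-3x}{1-2x}$.
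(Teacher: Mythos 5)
Your proof is correct and follows essentially the same route as the paper: the corollary is obtained by matching the specialization $R_{L_{S_3}}(x,1,3)$ of Theorem \ref{thm:schroder} (Equation \eqref{equ:pyramid}) against the Denise--Simion pyramid-weight generating function at $y=2$, and your algebra (the Catalan quadratic $x(1-2x)F^2-(1-3x)F+(1-2x)=0$ and the discriminant identity $(1-3x)^2-4x(1-2x)^2=1-10x+25x^2-16x^3$) checks out. The self-contained derivation of $\beta(x,y)=\tfrac{1+x-2xy}{1-xy}$ via the primitive decomposition is a sound bonus that the paper omits, since it merely cites \cite[Theorem 2.3]{Denise} without displaying the verification.
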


\section{The case for small Schr\"{o}der paths}
A lattice path in $A_{S_3}$ is said to be \emph{primitive} if it does not intersect the $x-$axis except at $(0,0)$ and $(2n,0)$. Let $PA_{S_3}$ denote the set of all primitive paths in $A_{S_3}$. Since every path in $A_{S_3}$ can be decomposed uniquely into a sequence of paths in $PA_{S_3}$, we have
\begin{equation}\label{equ:decompose}
  R_{A_{S_3}}(x,y,z)=\frac{1}{1-\bar{R}_{PA_{S_3}}(x,y,z)},
\end{equation}
where we use $\bar{R}_{L}(x,y,z)$ to denote the function $R_L(x,y,z)-1$ for a given set $L$ of lattice paths.

We now consider the generating function $\bar{R}_{PA_{S_3}}(x,y,z)$. Note that the set $UL_{S_3}$ can be partitioned as $UL_{S_3}=\bigcup_{i=1}^4U_i$, where
\begin{enumerate}[(1)]
  \item $U_1$=\{$P\mid$ $P$ starts with $U$ and ends with $D$\};
  \item $U_2$=\{$P\mid$ $P$ starts with $H$ and ends with $D$\};
  \item $U_3$=\{$P\mid$ $P$ starts with $U$ and ends with $H$\};
  \item $U_4$=\{$P\mid$ $P$ starts and ends with $H$\}.
\end{enumerate}

As shown in Section 2, each path $P\in UL_{S_3}$ can be obtained uniquely from a Dyck path $P'$
by inserting some $H$ steps, and we have the following fact:
\begin{enumerate}[(1)]
  \item if it is not allowed to insert at either the initial vertex or the end vertex of $P'$, then $P\in U_1$;
  \item if it is required to insert at the initial vertex of $P'$, and not allowed to insert at the end vertex, then $P\in U_2$;
  \item if it is required to insert at the end vertex of $P'$, and not allowed to insert at the initial vertex, then $P\in U_3$;
  \item if it is required to insert at both the initial vertex and the end vertex of $P'$, then $P\in U_4$.
\end{enumerate}

Based on the above observation, we can obtain the following result after some calculation.
\begin{lemma}\label{lem:Ai}
\begin{align*}
&\bar{R}_{U_1}(x,y,z)=\frac{(1-xz)^2}{(1-xz+x)^2}\bar{R}_{UL_{S_3}}(x,y,z),\\
&\bar{R}_{U_2}(x,y,z)=\bar{R}_{U_3}(x,y,z)=\frac{x(1-xz)}{(1-xz+x)^2}\bar{R}_{UL_{S_3}}(x,y,z),\\
&\bar{R}_{U_4}(x,y,z)=\frac{x^2}{(1-xz+x)^2}\bar{R}_{UL_{S_3}}(x,y,z).
\end{align*}
\end{lemma}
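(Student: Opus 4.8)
\textbf{Proof proposal for Lemma \ref{lem:Ai}.}

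The plan is to track, separately for each of the four classes $U_1,\dots,U_4$, how the insertion procedure of Section 2 contributes to the generating function, and to show that each contribution is a fixed rational multiple of $\bar R_{UL_{S_3}}(x,y,z)$. Recall from Section 2 that a path $P\in UL_{S_3}$ arises uniquely from a Dyck path $P'\in L_{S_1}(n)$ with $k$ peaks by the three-stage insertion of $m$ horizontal steps, and that in stage (1) one chooses $i$ vertices out of the $2k+1$ non-run vertices of $P'$. Among those $2k+1$ non-run vertices, exactly two are distinguished: the initial vertex $(0,0)$ and the end vertex $(2n,0)$ of $P'$ (these are never runs since $P'$ begins with $U$ and ends with $D$). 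The membership of $P$ in $U_1,U_2,U_3,U_4$ is governed precisely by whether an $H$ step is inserted at the initial vertex, at the end vertex, or at both, as stated in the four-way dichotomy preceding the lemma.

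First I would isolate the generating-function effect of a single non-run interior vertex. In the computation proving Theorem \ref{thm:schroder}, the factor coming from stage-(1) choices among the $2k+1$ non-run vertices is $\bigl(1+\tfrac{x}{1-xz}\bigr)^{2k+1} = \bigl(\tfrac{1-xz+x}{1-xz}\bigr)^{2k+1}$, i.e. each such vertex carries a factor $\tfrac{1-xz+x}{1-xz}$, which splits as $1$ (do not insert here) plus $\tfrac{x}{1-xz}$ (insert an $H$ here, possibly followed by further $H$'s from stage (3)). Now I would redo exactly that computation but with the initial and end vertices treated specially: for $U_1$ each of these two vertices contributes only the ``do not insert'' term, i.e. a factor $1$ instead of $\tfrac{1-xz+x}{1-xz}$; for $U_2$ and $U_3$ one of the two contributes the ``insert'' term $\tfrac{x}{1-xz}$ and the other contributes $1$; for $U_4$ both contribute $\tfrac{x}{1-xz}$. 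Since every other ingredient of the sum (the Narayana weight $N(n,k)x^n y^{2n-2k}$, the stage-(2) factor over the $2n-2k$ run vertices, and the remaining $2k-1$ interior non-run vertices) is untouched, each $\bar R_{U_\ell}$ equals $\bar R_{UL_{S_3}}$ with the two ``generic'' vertex factors $\bigl(\tfrac{1-xz+x}{1-xz}\bigr)^2$ replaced by the appropriate product: $1\cdot 1$ for $U_1$, $\tfrac{x}{1-xz}\cdot 1$ for $U_2$ and $U_3$, and $\bigl(\tfrac{x}{1-xz}\bigr)^2$ for $U_4$. Dividing out gives exactly the claimed ratios $\tfrac{(1-xz)^2}{(1-xz+x)^2}$, $\tfrac{x(1-xz)}{(1-xz+x)^2}$, $\tfrac{x(1-xz)}{(1-xz+x)^2}$, $\tfrac{x^2}{(1-xz+x)^2}$.

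The step I expect to be the main obstacle is justifying cleanly that ``replace two vertex factors'' is legitimate term by term in the multiple sum — that is, that the bijection-plus-insertion description really does let one factor the stage-(1) binomial sum $\sum_i \binom{2k+1}{i}(\cdots)$ as a product over individual vertices, with the initial and end vertices appearing as genuinely independent factors, and that the ``possibly followed by further $H$'s from stage (3)'' bookkeeping (the $\multiset{m-i-j}{i+j}$ factor and the resulting geometric series in $xz$) distributes correctly so that an inserted $H$ at a distinguished vertex still carries precisely the weight $\tfrac{x}{1-xz}$ regardless of where it sits. Once the stage-(3) summation is carried out as in the proof of Theorem \ref{thm:schroder}, turning $\multiset{m-i-j}{i+j}$ into the factor $(1-xz)^{-i-j}$, this independence is transparent: the $i+j$ inserted steps each contribute an independent $\tfrac{1}{1-xz}$, so one genuinely may ask ``was one of these insertions at the initial vertex? at the end vertex?'' and split accordingly. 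I would therefore present the argument by quoting the penultimate line of the displayed computation in the proof of Theorem \ref{thm:schroder}, namely
\[
\bar R_{UL_{S_3}}(x,y,z)=\sum_{n,k\geq 1}N(n,k)\,x^n y^{2n-2k}\Bigl(1+\tfrac{x}{y(1-xz)}\Bigr)^{2n-2k}\Bigl(\tfrac{1-xz+x}{1-xz}\Bigr)^{2k+1},
\]
and then writing the analogous four sums for $U_1,\dots,U_4$ with $\bigl(\tfrac{1-xz+x}{1-xz}\bigr)^{2k+1}$ replaced by $\bigl(\tfrac{1-xz+x}{1-xz}\bigr)^{2k-1}$ times $1$, $\tfrac{x}{1-xz}$, $\tfrac{x}{1-xz}$, $\bigl(\tfrac{x}{1-xz}\bigr)^2$ respectively, from which the stated identities follow immediately upon comparing with the displayed formula for $\bar R_{UL_{S_3}}$.
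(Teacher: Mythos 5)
Your proposal is correct and follows essentially the same route as the paper: the paper's proof also reruns the computation from Theorem \ref{thm:schroder} with the initial and end vertices of the underlying Dyck path treated specially, replacing $\binom{2k+1}{i}$ by $\binom{2k-1}{i}$ and attaching a factor $1$ or $\tfrac{x}{1-xz}$ per distinguished vertex according to whether insertion there is forbidden or required (it carries this out explicitly for $U_2$). Your per-vertex factorization argument, including the observation that the stage-(3) geometric series makes each inserted $H$ contribute an independent $\tfrac{x}{1-xz}$, matches the paper's calculation exactly.
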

\begin{proof}
The proof of the above result is almost the same as that of Theorem \ref{thm:schroder}. Here we take $\bar{R}_{U_2}(x,y,z)$ as an example. By the definition of $U_2$ , we have
\begin{small}
\begin{equation*}
\begin{aligned}
  &\bar{R}_{U_2}(x,y,z)=\\
  =&\sum_{\substack{n,k\geq 1\\i,j\geq 0\\m\geq 1}}N(n,k)\binom{2k-1}{i}\binom{2n-2k}{j}\multiset{m-i-j-1}{i+j+1}x^{m+n}y^{2n-2k-j}z^{m-i-j-1}\\
  =&\sum_{n,k\geq 1}N(n,k)x^ny^{2n-2k}\left(1+\frac{x}{y(1-xz)}\right)^{2n-2k}\left(1+\frac{x}{1-xz}\right)^{2k-1}\frac{x}{1-xz}\\
  =&\frac{x(1-xz)}{(1-xz+x)^2}\bar{R}_{UL_{S_3}}(x,y,z).
\end{aligned}
\end{equation*}
\end{small}
\end{proof}

Now we can obtain $R_{A_3}(x,y,z)$ as a direct corollary of the above result.
\begin{theorem}\label{thm:small}
\begin{equation*}
\begin{aligned}
  &R_{A_{S_3}}(x,y,z)\\
  =&\frac{1}{2}+\frac{-1+zx+(1-z)x^2+(1-xz)\sqrt{1-2(1+w)u+(1-w)^2u^2}}{2(y^2z-2y+1)x^2-2y^2x-2x},
  \end{aligned}
\end{equation*}
where $u=x\left(\frac{x+y(1-xz)}{1-xz}\right)^2$ and $w=\left(\frac{1-xz+x}{x+y(1-xz)}\right)^2$.
\end{theorem}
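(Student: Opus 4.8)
I would prove Theorem \ref{thm:small} by evaluating the right‑hand side of \eqref{equ:decompose}, for which I first need $\bar R_{PA_{S_3}}(x,y,z)$; this I would extract from the partition $UL_{S_3}=U_1\cup\cdots\cup U_4$ and Lemma \ref{lem:Ai}. A path in $PA_{S_3}$ of positive order must begin with $U$ and end with $D$ — an $H$ at either end would lie on the $x$‑axis, and a $D$ first or a $U$ last is impossible — so deleting the first and last steps gives a bijection $P=UQD$ from $PA_{S_3}$ onto $L_{S_3}\cup\{\text{empty path}\}$. Under this bijection $\mathrm{order}(P)=\mathrm{order}(Q)+1$, $\mathrm{hr}(P)=\mathrm{hr}(Q)$, and $\mathrm{dr}(P)=\mathrm{dr}(Q)+[Q\text{ begins with }U]+[Q\text{ ends with }D]$, the two indicators being $0$ for the empty $Q$. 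Splitting $L_{S_3}\cup\{\text{empty}\}$ into the empty path, $HL_{S_3}$, and $U_1,\dots,U_4$, and reading off the extra power of $y$ contributed in each class, I obtain
\[
\bar R_{PA_{S_3}}=x+x\bar R_{HL_{S_3}}+x\bigl(y^{2}\bar R_{U_1}+y\bar R_{U_2}+y\bar R_{U_3}+\bar R_{U_4}\bigr).
\]

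Next I would substitute $\bar R_{HL_{S_3}}=\tfrac{x}{1-xz}$ and the four formulas of Lemma \ref{lem:Ai}. Writing $t=1-xz$ and using $\bar R_{U_2}=\bar R_{U_3}$, the middle combination turns into a perfect square, $y^{2}\bar R_{U_1}+2y\bar R_{U_2}+\bar R_{U_4}=\tfrac{(yt+x)^{2}}{(t+x)^{2}}\,\bar R_{UL_{S_3}}$, and from the proof of Theorem \ref{thm:schroder} one has $\bar R_{UL_{S_3}}=\tfrac{t+x}{t}\sum_{n\ge1}N_n(w)u^{n}$ with $u,w$ as in that theorem. Using the identities $x(yt+x)^{2}=ut^{2}$ and $wu=x\bigl(\tfrac{t+x}{t}\bigr)^{2}$, the expression collapses; replacing $\sum_{n\ge0}N_n(w)u^{n}$ by its closed form $\tfrac{1+(1-w)u-\sqrt D}{2u}$ from \eqref{equ:Sulanke1}, with $D=1-2(1+w)u+(1-w)^{2}u^{2}$, then feeding the result into \eqref{equ:decompose}, clearing denominators, and using $(1+w)u=u+x\bigl(\tfrac{t+x}{t}\bigr)^{2}$, I expect to arrive at
\[
R_{A_{S_3}}=\frac{2(t+x)}{N_1+\sqrt{D_0}},\qquad N_1=t(1-x+xy^{2})+2x(1-x+xy),\quad D_0=t^{2}D.
\]
Here $D_0$ is in fact a polynomial: from $(x+yt)^{2}-(t+x)^{2}=t(y-1)\bigl(2x+t(y+1)\bigr)$ one gets $(1-w)^{2}u^{2}=\tfrac{x^{2}(y-1)^{2}(2x+t(y+1))^{2}}{t^{2}}$, hence $D_0=t^{2}-2x\bigl((x+yt)^{2}+(t+x)^{2}\bigr)+x^{2}(y-1)^{2}\bigl(2x+t(y+1)\bigr)^{2}$, so that $\sqrt{D_0}=(1-xz)\sqrt D$ is precisely the radical in the statement.

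The final step, which I expect to be the bulk of the work, is to rationalize $R_{A_{S_3}}=\tfrac{2(t+x)(N_1-\sqrt{D_0})}{N_1^{2}-D_0}$ and compare it with the claimed form $\tfrac12+\tfrac{m+\sqrt{D_0}}{2\mathcal E}$, where $m=-1+zx+(1-z)x^{2}$ and $\mathcal E=(y^{2}z-2y+1)x^{2}-y^{2}x-x$. Separating rational and irrational parts, the equality is equivalent to the two polynomial identities $N_1+\mathcal E+m=0$ and $D_0-N_1^{2}=4\mathcal E(t+x)$. The first is a one‑line expansion — the $x$‑free terms and the $t$‑linear terms cancel separately. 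The second is a finite but heavy verification; I would organize it by expanding both sides as polynomials in $x$ over $\mathbb Q[y,z]$, noting first that their degree‑$4$ terms agree because the $x^{2}$‑coefficient of $N_1$ is $(y-1)\bigl(2-z(y+1)\bigr)$, whose square is the $x^{4}$‑coefficient of $D_0$, and then collecting powers of $z$. Throughout, the only point of care is to keep $\sqrt D$ on the branch with constant term $1$ forced by Theorem \ref{thm:schroder}, which makes the rationalization and every intermediate manipulation a genuine identity of formal power series in $x$.
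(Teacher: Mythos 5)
Your proposal is correct and follows essentially the same route as the paper: the decomposition \eqref{equ:decompose}, the case analysis $P=UQD$ with $Q$ empty, in $HL_{S_3}$, or in $U_1,\dots,U_4$ weighted by $x$, $x$, $xy^2$, $xy$, $xy$, $x$ (this is exactly the paper's Equation \eqref{equ:A3}), and then Lemma \ref{lem:Ai} together with Theorem \ref{thm:schroder}. The only difference is that you carry out explicitly the final rationalization that the paper delegates to computer algebra; your two polynomial identities $N_1+\mathcal{E}+m=0$ and $D_0-N_1^{2}=4\mathcal{E}(t+x)$ do hold, so the verification goes through as you describe.
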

\begin{proof}
Since
$$PA_{S_3}=\{UPD\mid P\; \text{is empty or}\; P\in L_{S_3}\},$$
we obtain from Lemma \ref{lem:Ai} that
\begin{equation}\label{equ:A3}
\begin{aligned}
  &\bar{R}_{PA_{S_3}}(x,y,z)\\
  =&x+x\bar{R}_{HL_{S_3}}+xy^2\bar{R}_{U_{1}}+xy\bar{R}_{U_2}+xy\bar{R}_{U_3}+x\bar{R}_{U_4}\\
  =&\frac{x(1-xz+x)}{1-xz}+\frac{x}{w}\bar{R}_{UL_{S_3}}(x,y,z)\\
  =&\frac{x(1-xz+x)}{1-xz}+\frac{x}{w}(R_{L_{S_3}}(x,y,z)-\frac{1-xz+x}{1-xz}).
  \end{aligned}
\end{equation}
Combining Equation \eqref{equ:decompose}, Equation \eqref{equ:A3} and Theorem \ref{thm:schroder} together, we then obtain Theorem \ref{thm:small}.
\end{proof}

Setting the pair $(y,z)$ to be $(2,2)$, $(1,2)$ and $(2,1)$ respectively in Theorem \ref{thm:small}, we then obtain the generating functions $Q_{S_i}(x)$ for $4\leq i\leq 6$.
\begin{corollary}\label{coro:small1}
  \begin{align*}
    &Q_{S_4}(x)=\frac{1+4x-\sqrt{1-12x+16x^2}}{10x},\\
    &Q_{S_5}(x)=\frac{-1+\sqrt{1-8x+12x^2-4x^3}}{2x(x-2)},\\
    &Q_{S_6}(x)=\frac{-1-4x+x^2+\sqrt{(1-x)(1-11x+7x^2-x^3)}}{2x(x-5)}.
  \end{align*}
\end{corollary}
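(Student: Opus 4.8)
\section*{Proof proposal for Corollary~\ref{coro:small1}}

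The plan is to mirror the proof of Corollary~\ref{coro:Kung}: first reduce each $Q_{S_i}(x)$ to a specialization of $R_{A_{S_3}}(x,y,z)$ via a bijection, and then extract the closed form from Theorem~\ref{thm:small} by the indicated substitution. For the first step I would observe that the Huh--Park map $\phi\colon\bar{L}_{S_3}(n)\to L_{S_4}(n)$ restricts to a bijection from $\bar{A}_{S_3}(n)$, the subset of $\bar{L}_{S_3}(n)$ whose underlying paths are small Schr\"{o}der paths, onto $A_{S_4}(n)$: a small Schr\"{o}der path has no $H$ step, hence no horizontal run, on the $x$-axis, and since $\phi$ only merges consecutive like steps, every $(2r,0)$ step of $\phi(P)$ stays strictly above the axis; conversely $\phi^{-1}$ un-merges a path of $A_{S_4}(n)$ into a small Schr\"{o}der path. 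Recording the two colours on diagonal runs by $y=2$ and on horizontal runs by $z=2$ then gives $Q_{S_4}(x)=R_{A_{S_3}}(x,2,2)$; colouring only the horizontal runs, or only the diagonal runs, gives in the same way $Q_{S_5}(x)=R_{A_{S_3}}(x,1,2)$ and $Q_{S_6}(x)=R_{A_{S_3}}(x,2,1)$.

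For the second step I would substitute each pair $(y,z)\in\{(2,2),(1,2),(2,1)\}$ into the formula of Theorem~\ref{thm:small} and simplify. In every case one first computes the auxiliary quantities: for $(2,2)$, $u=x\left(\frac{2-3x}{1-2x}\right)^2$ and $w=\left(\frac{1-x}{2-3x}\right)^2$; for $(1,2)$, $u=x\left(\frac{1-x}{1-2x}\right)^2$ and $w=1$; for $(2,1)$, $u=x\left(\frac{2-x}{1-x}\right)^2$ and $w=\frac{1}{(2-x)^2}$. Writing the radicand as $1-2(1+w)u+(1-w)^2u^2=(1-u+wu)^2-4wu$, the crucial observation is that $1-u+wu$ collapses to a rational function whose numerator factors so that the radicand equals a perfect square times the target polynomial; explicitly this uses $1-7x+14x^2-8x^3=(1-x)(1-2x)(1-4x)$ for $(2,2)$, the vanishing $w=1$ (which reduces the radicand to $1-4u$) for $(1,2)$, and $(1-4x+x^2)^2-4x=(1-x)(1-11x+7x^2-x^3)$ for $(2,1)$. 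After the root is taken, the prefactor $1-xz$ in the numerator of Theorem~\ref{thm:small} cancels the denominator $1-xz$ acquired by the radical; one then checks that the denominator of Theorem~\ref{thm:small} factors as $-10x(1-x)$, $2x(x-2)$, $2x(x-5)$ respectively, and that the rational part of the numerator ($-(1-x)^2$ in the first two cases, $x-1$ in the last) combines with the leading $\frac{1}{2}$ to leave exactly $Q_{S_4}(x)$, $Q_{S_5}(x)$ and $Q_{S_6}(x)$.

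The only real difficulty is the algebraic bookkeeping in the second step: one must track the nested rational expressions in $u$ and $w$, spot the three polynomial factorizations that make the radical collapse to the advertised discriminants $1-12x+16x^2$, $1-8x+12x^2-4x^3$ and $(1-x)(1-11x+7x^2-x^3)$, and choose the branch of the square root consistent with the normalization $R_{A_{S_3}}(0,y,z)=1$. Each of these identities is verified by clearing denominators, and no further idea is needed, so the corollary follows.
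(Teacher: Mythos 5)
Your proposal is correct and follows essentially the same route as the paper: the paper likewise obtains the corollary by specializing $(y,z)$ to $(2,2)$, $(1,2)$, $(2,1)$ in Theorem~\ref{thm:small}, with the restriction of the Huh--Park bijection to small paths left implicit. Your auxiliary values of $u$ and $w$, the three factorizations of the radicands, and the final simplifications all check out, so you have in fact supplied more detail than the paper does.
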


Expanding the above functions, we have
\begin{align*}
  Q_{S_4}(x)=&1+x+6{x}^{2}+41{x}^{3}+306{x}^{4}+2426{x}^{5}+20076{x}^{6}+\cdots,\\
  Q_{S_5}(x)=&1+x+3{x}^{2}+12{x}^{3}+53{x}^{4}+248{x}^{5}+1209{x}^{6}+\cdots,\\
  Q_{S_6}(x)=&1+x+6{x}^{2}+40{x}^{3}+293{x}^{4}+2286{x}^{5}+18637{x}^{6}+\cdots.
\end{align*}

The coefficients of $Q_{S_4}(x)$ appear as sequence A078009 in OEIS. The generating functions $Q_{S_5}(x)$ and $Q_{S_6}(x)$, to our knowledge, have not been studied before. From Corollary \ref{coro:small1}, we can obtain the following asymptotic formulas:
\begin{align*}
  |A_{S_4}(n)|\sim &\frac{\gamma_1}{\alpha_1^n\sqrt{\pi n^3}},\\
  |A_{S_5}(n)|\sim &\frac{\gamma_2}{\alpha_2^n\sqrt{\pi n^3}},\\
  |A_{S_6}(n)|\sim &\frac{\gamma_3}{\alpha_3^n\sqrt{\pi n^3}},
\end{align*}
where $\alpha_i$ and $f_i$ are the same as those in Section 2, and $\gamma_i$ is defined as following:
\begin{align*}
  \gamma_1&=\frac{\sqrt{-\alpha_1f_1'(\alpha_1)}}{20\alpha_1}=\frac{\sqrt{10+6\sqrt{5}}}{10},\\
  \gamma_2&=\frac{\sqrt{-\alpha_2f_2'(\alpha_2)}}{4\alpha_2(2-\alpha_2)}=0.70954\cdots,\\
  \gamma_3&=\frac{\sqrt{-\alpha_3f_3'(\alpha_3)}}{4\alpha_3(5-\alpha_3)}=0.50971\cdots.
  \end{align*}

It is well known (see, for example, \cite{Deutsch,Stanley}) that $|L_{S_3}(n)|=2|A_{S_3}(n)|$. Comparing the asymptotic formulas of $|L_{S_i}(n)|$ and $|A_{S_i}(n)|$ for $4\leq i\leq 6$, we have the following analogue.
\begin{corollary}
\begin{align*}
   \lim_{n\rightarrow \infty}\frac{|L_{S_4}(n)|}{|A_{S_4}(n)|}&=\frac{5(1-\alpha_1)^2}{(2-3\alpha_1)^2}=1.39320\cdots,\\
   \lim_{n\rightarrow \infty}\frac{|L_{S_5}(n)|}{|A_{S_5}(n)|}&=\frac{2-\alpha_2}{1-\alpha_2}=2.19393\cdots, \\
   \lim_{n\rightarrow \infty}\frac{|L_{S_5}(n)|}{|A_{S_5}(n)|}&=\frac{5-\alpha_3}{(2-\alpha_3)^2}=1.35364\cdots.
\end{align*}
\end{corollary}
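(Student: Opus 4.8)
The plan is to read the three limits straight off the asymptotic expansions already recorded in Sections~2 and~3. Since $|L_{S_i}(n)|\sim \beta_{i-3}/(\alpha_{i-3}^n\sqrt{\pi n^3})$ and $|A_{S_i}(n)|\sim \gamma_{i-3}/(\alpha_{i-3}^n\sqrt{\pi n^3})$ for $i=4,5,6$ share the same exponential rate $\alpha^{-1}$ and the same polynomial factor $n^{-3/2}$, the ratio $|L_{S_i}(n)|/|A_{S_i}(n)|$ converges to $\beta_{i-3}/\gamma_{i-3}$. It then only remains to simplify this quotient, and for that one notes that in each case $\beta$ and $\gamma$ carry the common radical factor $\sqrt{-\alpha f'(\alpha)}/\alpha$ (or $\sqrt{-\alpha f'(\alpha)}/(4\alpha)$), which cancels. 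Thus $\beta_1/\gamma_1=\tfrac{5(1-\alpha_1)^2}{(2-3\alpha_1)^2}$, $\beta_2/\gamma_2=\tfrac{2-\alpha_2}{1-\alpha_2}$, and $\beta_3/\gamma_3=\tfrac{5-\alpha_3}{(2-\alpha_3)^2}$; substituting $\alpha_1=(3-\sqrt5)/8$ and the numerical roots $\alpha_2,\alpha_3$ of $f_2,f_3$ produces the stated decimals. (I would also flag the evident typo that the third displayed line should read $|L_{S_6}(n)|/|A_{S_6}(n)|$.)

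The one piece of substance behind this is the validity of those asymptotic formulas, but it is already established in the text: every generating function in Corollary~\ref{coro:Kung} and Corollary~\ref{coro:small1} is algebraic of square-root type, its dominant singularity being the least positive root $\alpha_{i-3}$ of the polynomial $f_{i-3}$ appearing under the radical, and the rational prefactors neither vanish there nor contribute a nearer singularity. Singularity analysis in the style of \cite{Flajolet} (Chapter~VI), exactly as Kung and Miler applied it to $|L_{S_4}(n)|$, then yields the $\beta/(\alpha^n\sqrt{\pi n^3})$ and $\gamma/(\alpha^n\sqrt{\pi n^3})$ estimates with the constants as written. For the corollary one does not even need these in full strength; it suffices that the two sequences have the \emph{same} $\alpha$ (true because both arise from the single radical $\sqrt{f_{i-3}(x)}$) and that the subleading contributions are $o(\cdot)$ of the main term, so that the coefficient ratio actually converges.

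I expect the main obstacle — a mild one — to be exactly that bookkeeping: verifying for each $i$ that the $L$-type and $A$-type functions share the dominant singularity $\alpha_{i-3}$ and that no cancellation in the rational prefactor shifts it, and confirming the prefactors are nonzero and analytic at $\alpha_{i-3}$. Once that is checked, the three limits are immediate from $\lim_n |L_{S_i}(n)|/|A_{S_i}(n)| = \beta_{i-3}/\gamma_{i-3}$ together with the elementary simplifications above.
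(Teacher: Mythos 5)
Your proposal matches the paper's argument exactly: the corollary is obtained by dividing the asymptotic formulas $|L_{S_i}(n)|\sim\beta_{i-3}/(\alpha_{i-3}^n\sqrt{\pi n^3})$ and $|A_{S_i}(n)|\sim\gamma_{i-3}/(\alpha_{i-3}^n\sqrt{\pi n^3})$, cancelling the common factor $\sqrt{-\alpha f'(\alpha)}/(4\alpha)$, and your simplified ratios $\beta_i/\gamma_i$ agree with the stated expressions. You are also right that the third line contains a typo and should read $|L_{S_6}(n)|/|A_{S_6}(n)|$.
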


By giving a bijection between 5-colored Dyck paths and $|A_{S_4}(n)|$, Huh and Park \cite{Huh} gave the following expression for $|A_{S_4}(n)|$, which we can also prove here with generating function.
\begin{corollary}\label{coro:small2}\cite{Huh}
\begin{equation*}
|A_{S_4}(n)|=\sum_{k=1}^n N(n,k)5^{n-k}.
\end{equation*}
\end{corollary}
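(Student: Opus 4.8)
The plan is to prove the identity by comparing generating functions, using Corollary~\ref{coro:small1} for the left-hand side and Sulanke's formula~\eqref{equ:Sulanke1} for the right-hand side. By the definition of $Q_{S_4}$ and Corollary~\ref{coro:small1}, we have
\[
1+\sum_{n\ge 1}|A_{S_4}(n)|x^n=Q_{S_4}(x)=\frac{1+4x-\sqrt{1-12x+16x^2}}{10x},
\]
so it suffices to show that $1+\sum_{n\ge 1}\bigl(\sum_{k=1}^n N(n,k)5^{n-k}\bigr)x^n$ equals this same expression, and then to extract the coefficient of $x^n$ on both sides.

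First I would rewrite the right-hand side in terms of the Narayana polynomials. Since $\sum_{k=1}^n N(n,k)5^{n-k}=5^n\sum_{k=1}^n N(n,k)(1/5)^k=5^nN_n(1/5)$ and $N_0(y)=1$, we get
\[
1+\sum_{n\ge 1}\Bigl(\sum_{k=1}^n N(n,k)5^{n-k}\Bigr)x^n=\sum_{n\ge 0}N_n(1/5)(5x)^n.
\]
Next I would apply \eqref{equ:Sulanke1} with $y=1/5$ and with the variable $x$ replaced by $5x$. A direct substitution gives $(1-y)(5x)=4x$, $2(1+y)(5x)=12x$, $(1-y)^2(5x)^2=16x^2$, and $2(5x)=10x$, so the right-hand side of \eqref{equ:Sulanke1} becomes precisely $\dfrac{1+4x-\sqrt{1-12x+16x^2}}{10x}=Q_{S_4}(x)$. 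Comparing the coefficients of $x^n$ then yields $|A_{S_4}(n)|=\sum_{k=1}^n N(n,k)5^{n-k}$.

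There is essentially no serious obstacle here; the only points that need care are the treatment of the $n=0$ term (so that the leading constant $1$ matches on both sides) and the rescaling $x\mapsto 5x$, which is exactly what converts the $1/(2x)$ prefactor of \eqref{equ:Sulanke1} into the $1/(10x)$ prefactor of $Q_{S_4}(x)$. If one preferred not to invoke Corollary~\ref{coro:small1}, the identical computation could instead be started from Theorem~\ref{thm:small} evaluated at $(y,z)=(2,2)$, but routing through Corollary~\ref{coro:small1} is the shortest path and keeps the algebra minimal.
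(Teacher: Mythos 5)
Your proposal is correct and follows exactly the paper's own argument: rewrite $\sum_k N(n,k)5^{n-k}$ as $5^nN_n(1/5)$, apply Sulanke's formula \eqref{equ:Sulanke1} with $y=1/5$ and $x\mapsto 5x$ to get $\frac{1+4x-\sqrt{1-12x+16x^2}}{10x}$, and match this against $Q_{S_4}(x)$ from Corollary~\ref{coro:small1}. The substitution checks you carried out are accurate, so nothing further is needed.
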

\begin{proof}
By Equation \eqref{equ:Sulanke1}, we have
\begin{equation*}
1+\sum_{n,k\geq 1} N(n,k)5^{n-k}x^n=\sum_{n\geq 0}N_n(\frac{1}{5})(5x)^n=\frac{1+4x-\sqrt{1-12x+16x^2}}{10x}.
\end{equation*}
Then Corollary \ref{coro:small2} is derived from Corollary \ref{coro:small1}.
\end{proof}

\section*{Acknowledgements}
The calculations were done with the help of MAPLE.

\end{document}